\newtheorem{thm}{Theorem}
\newtheorem{lem}{Lemma}
\newtheorem{cor}{Corollary}
\newcommand{\R}{\mathbb{R}}
\newcommand{\Z}{\mathbb{Z}}
\newcommand{\I}{\mathcal{I}}
\newcommand{\J}{\mathcal{J}}
\newcommand{\PP}{\mathcal{P}}
\newcommand{\A}{\mathcal{A}}
\newcommand{\G}{\mathcal{G}}
\DeclareMathOperator{\spn}{span}
\DeclareMathOperator{\init}{in}
\def\newop#1{\expandafter\def\csname #1\endcsname{\mathop{\rm
#1}\nolimits}}
\begin{document}

\title[Properties of Lecture Hall Polytopes]{Some Algebraic Properties of Lecture Hall Polytopes}

\author{Petter Br\"and\'en}
\date{\today}
\address{Matematik, KTH, SE-100 44 Stockholm, Sweden}
\email{pbranden@kth.se}

\author{Liam Solus}
\date{\today}
\address{Matematik, KTH, SE-100 44 Stockholm, Sweden}
\email{solus@kth.se}





\begin{abstract}
In this note, we investigate some of the fundamental algebraic and geometric properties of $s$-lecture hall simplices and their generalizations.  
We show that all $s$-lecture hall order polytopes, which simultaneously generalize $s$-lecture hall simplices and order polytopes, satisfy a property which implies the integer decomposition property.  
This answers one conjecture of Hibi, Olsen and Tsuchiya.  
By relating $s$-lecture hall polytopes to alcoved polytopes, we then use this property to show that families of $s$-lecture hall simplices admit a quadratic Gr\"obner basis with a square-free initial ideal.  
Consequently, we find that all $s$-lecture hall simplices for which the first order difference sequence of $s$ is a $0,1$-sequence have a regular and unimodular triangulation.  
This answers a second conjecture of Hibi, Olsen and Tsuchiya, and it gives a partial answer to a conjecture of Beck, Braun, K\"oppe, Savage and Zafeirakopoulos.
\end{abstract}


\keywords{lecture hall polytope, integer decomposition property, regular unimodular triangulation, Gr\"obner basis, toric ideal}


\maketitle
\thispagestyle{empty}

\section{Introduction}
\label{sec: introduction}
Let $s = (s_1,\ldots,s_n)$ be a sequence of positive integers.  
An \emph{$s$-lecture hall partition} is a (lattice) point in $\Z^n$ living in the \emph{$s$-lecture hall cone} 
\[
C_n^s := \left\{\lambda\in\R^n : 0\leq\frac{\lambda_1}{s_1}\leq\frac{\lambda_2}{s_2}\leq\cdots\leq\frac{\lambda_n}{s_n}\right\}.
\]
The $s$-lecture hall partitions are generalizations of the \emph{lecture hall partitions}, which correspond to the special case where $s = (1,2,\ldots,n)$.
Lecture hall partitions were first studied by Bousquet-M\'{e}lon and Eriksson \cite{BME97} who proved that
\[
\sum_{\lambda\in C_n^{(1,2,\ldots,n)}\cap\Z^n}q^{\lambda_1+\cdots+\lambda_n} = \frac{1}{\prod_{i=1}^n 1-q^{2i-1}}.
\]
In \cite{SS12}, the \emph{$s$-lecture hall simplex} is defined to be the lattice polytope
\[
P_n^s :=\{\lambda\in C_n^s : \lambda_n \leq s_n \}.
\]
A $d$-dimensional lattice polytope $P\subset\R^n$ is the convex hull of finitely many points in $\Z^n$ whose affine span is dimension $d$.  
For a positive integer $k$, we define $kP :=\{kp\in\R^n: p\in P\}$.  
The generating function
\[
1+\sum_{k>0}|kP\cap\Z^n|x^k = \frac{h_0^\ast+h_1^\ast x+\cdots+h_d^\ast x^d}{(1-x)^{d+1}},
\]
is called the \emph{Ehrhart series} of $P$, and the polynomial $h_0^\ast+h_1^\ast x+\cdots+h_d^\ast x^d$ is called the \emph{(Ehrhart) $h^\ast$-polynomial} of $P$.  
The $h^\ast$-polynomial has only nonnegative integer coefficients, and for the $s$-lecture hall simplex $P_n^s$ it is called the \emph{$s$-Eulerian polynomial}.  
In the case that $s = (1,2,\ldots,n)$, the $s$-Eulerian polynomial is the classic $n^{th}$ Eulerian polynomial, which enumerates the permutations of $[n]$ via the descent statistic.  
One remarkable feature of this generalization is that every $s$-Eulerian polynomial has only real zeros, and thus they each have a log-concave and unimodal sequence of coefficients \cite{SV15}.  
Identifying large families of lattice polytopes with unimodal $h^\ast$-polynomials is a popular research topic with natural connections to the algebra and geometry of the toric varieties associated to lattice polytopes.
Showing that an $h^\ast$-polynomial is real-rooted is a common approach to proving unimodality results in geometric and algebraic combinatorics \cite{BJM19,GS18,SV15,S19}.  
However, the applicability of this proof technique to families of $h^\ast$-polynomials does not obviously relate to the algebraic structure of the associated toric variety for the underlying polytopes.  
Consequently, research into the algebraic properties of the $s$-lecture hall simplices and their generalizations that can be used to verify unimodality of the associated $h^\ast$-polynomials is an ongoing and popular topic \cite{BBKSZ15a,BBKSZ15b,BL16,HOT16,PS13a,PS13b,SS12,SV15}.  

In this note, we prove some fundamental algebraic properties of $s$-lecture hall simplices and their generalizations.  
We show that all \emph{$s$-lecture hall order polytopes} \cite{BL16}, a common generalization of $s$-lecture hall simplices and order polytopes, have the integer decomposition property.  
This result positively answers a conjecture of \cite{HOT16}.  
As an application of this result, we then give an explicit description of a quadratic and square-free Gr\"obner basis for the affine toric ideal of families of $s$-lecture hall simplices.  
To do so, we relate $s$-lecture hall polytopes to \emph{alcoved polytopes} \cite{LP07}.
The identified Gr\"obner basis is purely lexicographic and can be constructed for any toric ideal associated to an $s$-lecture hall simplex for which the first-order difference sequence of $s$ is a $0,1$-sequence.  
This answers a second conjecture of \cite{HOT16} in a special case that they noted to be of particular interest, and it provides a partial answer to the conjecture of \cite{BBKSZ15b}.  


\section{The Integer Decomposition Property for $s$-Lecture Hall Order Polytopes}
\label{sec: the integer decomposition property}

\subsection{The algebraic structure of a lattice polytope}
\label{subsec: the algebraic structure of a lattice polytope}
There are two important algebraic objects associated to a lattice polytope $P\subset\R^n$.  
The first is its \emph{toric ideal}, the zero locus of which is the \emph{affine toric variety of $P$}.  
The second is the \emph{Ehrhart ring of $P$}, which is a graded and semistandard semigroup algebra associated to $P$.  
The integer decomposition property is precisely the property that tells us when the coordinate ring of the affine toric variety of $P$ coincides with its Ehrhart ring.
Hence, it is desirable to know if a family of lattice polytopes admits this property.  

For a lattice polytope $P\subset\R^n$, define the \emph{cone over $P$} to be the convex cone
\[
\cone(P) :=\spn_{\R^{n+1}}\{(p,1)\in\R^n\times\R : p\in P\}\subset\R^{n+1}.
\]
To any integer point $z = (z_1,\ldots,z_{n+1})\in\Z^{n+1}$ we associate a laurent monomial $t^z:=t_1^{z_1}t_2^{z_2}\cdots t_{n+1}^{z_{n+1}}$.  
Let $\{a_1,\ldots,a_m\} := \{(p,1)\in\R^n\times \R : p\in P\cap\Z^n\}$, and let $K[{\bf x}] :=K[x_1,\ldots,x_m]$ denote the polynomial ring over the field $K$ in $m$ indeterminates.
The \emph{toric ideal} of $P$, denoted $\I_P$, is the kernel of the semigroup algebra homomorphism
\begin{equation*}
\Phi : K[{\bf x}]\longrightarrow K[t_1,t_2,\ldots,t_{n+1},t_1^{-1},t_2^{-1},\ldots,t_{n+1}^{-1}] 
\qquad 
\mbox{where}
\qquad
\Phi : x_i\mapsto t^{a_i}.
\end{equation*}

For $k\in\Z_{>0}$ we let $kP:=\{kp:p\in P\}$ denote the $k^{th}$ dilate of $P$, and we let $A(P)_k$ denote the vector space (over $K$) spanned by the monomials $t_1^{z_1}t_2^{z_2}\cdots t_n^{z_n}t_{n+1}^k$ for $z \in kP\cap\Z^n$.  
Since $P$ is convex we have that $A(P)_kA(P)_r\subset A(P)_{k+r}$ for all $k,r\in\Z_{>0}$.
It follows that the graded algebra 
\[
A(P) :=\bigoplus_{k=0}^\infty A(P)_k
\]
is finitely generated over $A(P)_0 := K$, and we call it the \emph{Ehrhart Ring of $P$}.  
Equivalently, $A(P)$ is the semigroup algebra $K[t^z: z\in\cone(P)\cap\Z^{n+1}]$ with the grading $\deg(t_1^{z_1}\cdots t_{n+1}^{z_{n+1}}) = z_{n+1}$.  
\begin{def}
\label{def: IDP}
A polytope $P\subset\R^n$ has the \emph{integer decomposition property}, or is IDP (or is \emph{integrally closed}), if for every positive integer $k$ and every $z\in kP\cap \Z^n$, there exist $k$ points $z^{(1)},z^{(2)},\ldots,z^{(k)}$ such that
$z = \sum_iz^{(i)}$.
\end{def}
Since the coordinate ring of the toric ideal $\I_P$ is 
$
K[{\bf x}]/\I_P\cong K[t^{a_1},\ldots,t^{a_m}],
$
the polytope $P$ is IDP if and only if this quotient ring is isomorphic to $A(P)$.  
In this case, the toric algebra of $\I_P$ can be used to recover the Ehrhart theoretical data encoded in $A(P)$.  
Therefore, it is desirable to understand when combinatorially interesting polytopes are IDP.

\subsection{$s$-Lecture hall order polytopes}
\label{subsec: s-lecture hall order polytopes}
A \emph{labeled poset} is a partially ordered set $\PP$ on $[n]:=\{1,2,\ldots,n\}$ for some positive integer $n$; that is, $\PP = ([n], \preceq)$ where $\preceq$ denotes the partial order imposed on the ground set $[n]$.  
In the following, we let $\leq$ denote the usual total order on the integers.  
We say that $\PP$ is \emph{naturally labeled} if it is a labeled poset for which $i\leq j$ whenever $i\preceq j$.  
Let $s = (s_1,\ldots,s_n)$ be a sequence of positive integers and let $\PP = ([n],\preceq)$ be a naturally labeled poset.  
A \emph{$(\PP,s)$-partition} is a map $\lambda: [n]\longrightarrow \R$ such that
\[
\frac{\lambda_i}{s_i}\leq\frac{\lambda_j}{s_j}
\qquad 
\mbox{whenever}
\qquad
i\prec j,
\]
where we let $\lambda_i$ denote $\lambda(i)$ for all $i\in[n]$.
The \emph{$s$-lecture hall order polytope} associated to $(\PP,s)$ is the convex polytope
\[
O(\PP,s) := \left\{\lambda\in\R^n : \mbox{ $\lambda$ is a $(\PP,s)$-partition and $0\leq\lambda_i\leq s_i$ for all $i\in[n]$}\right\}.
\]
The $s$-lecture hall order polytopes were introduced in \cite{BL16} as a common generalization of the well-known order polytopes and the $s$-lecture hall simplices.
When $s = (1,1,\ldots, 1)$, then $O(\PP,s)$ is the order polytope associated to $\PP$, and when $\PP$ is the $n$-chain $O(\PP,s) = P_n^s$.  
In \cite{HOT16}, it is conjectured that all $s$-lecture hall simplices are IDP.  
We now prove a more general (and stronger) statement.  

A poset $\PP = ([n],\preceq_\PP)$ is called a \emph{lattice} if every pair of elements $a,b\in[n]$ have both a least upper bound and a greatest lower bound in $\PP$.  
An element $c\in[n]$ is a \emph{least upper bound} of $a$ and $b$ in $\PP$ if $a\preceq_\PP c$, $b\preceq_\PP c$ and whenever $d\in[n]$ satisfies $a\preceq_\PP d$ and $b \preceq_\PP d$ then $c\preceq_\PP d$.
Analogously, $c\in[n]$ is a \emph{greatest lower bound} of $a$ and $b$ in $\PP$ if $a\succeq_\PP c$, $b\succeq_\PP c$ and whenever $d\in[n]$ satisfies $a\succeq_\PP d$ and $b \succeq_\PP d$ then $c\succeq_\PP d$.
Whenever a least upper bound or greatest lower bound exists, it is unique.  
So we let $a\vee b$ denote the least upper bound of $a$ and $b$ in $\PP$ and $a\wedge b$ denote their greatest lower bound.
A lattice $\PP$ is called \emph{distributive} if for all triples of elements $a,b,c$ in $\PP$ we have that
\[
a\vee(b\wedge c)  = (a\vee b) \wedge (a\vee c).
\]
Let $\Lambda(\PP,s)$ denote the collection of all maps $\lambda:[n]\longrightarrow\Z$ satisfying
\[
\frac{\lambda_i}{s_i}\leq\frac{\lambda_j}{s_j}
\qquad 
\mbox{whenever}
\qquad
i\preceq j.
\]
In general, we will identify a map $p: [n]\longrightarrow\R$ with the point $(p_1,\ldots,p_n)\in\R^n$.  
Conversely, every point $p\in\R^n$ corresponds to a map $p:[n]\longrightarrow\R$.  
Note that $\Lambda(\PP,s)$ is a distributive sublattice of $\Z^n$, under the usual product ordering. 
Moreover, $\Lambda(\PP,s)= \Lambda(\PP,s) + \Z (s_1,\ldots,s_n)$ and 
$kO(P,s)\cap \Z^n= \Lambda(\PP,s) \cap \prod_{i\in[n]}[0,ks_i]$ for all $k \in \Z_{>0}$. 
Let $\lambda,\gamma \in O(P,s)\cap \Z^n$. We write $\lambda \unlhd \gamma$ provided that 
\begin{enumerate}
\item $\lambda_i \leq \gamma_i$ for all $i \in [n]$,
\item if $\lambda_i \neq 0$, then $\gamma_i = s_i$. 
\end{enumerate}

\begin{thm}
\label{thm: IDP}
Let $\PP = ([n],\preceq)$ be a naturally labeled poset and let $s = (s_1,\ldots,s_n)$ be a sequence of positive integers.
If $\lambda \in kO(\PP,s)\cap \Z^n$ for $k\in\Z_{>0}$, then there are unique elements $\lambda^{(1)}, \ldots, \lambda^{(k)} \in O(\PP,s)\cap \Z^n$ such that 
\begin{equation}\label{nicechain}
\lambda^{(1)} \unlhd \lambda^{(2)} \unlhd \cdots \unlhd \lambda^{(k)}  \ \ \mbox{ and } \ \  \lambda= \lambda^{(1)}+\lambda^{(2)}+\cdots+\lambda^{(k)}.
\end{equation}
Moreover, if 
\[
\lambda= \gamma^{(1)} +\gamma^{(2)}+\cdots+\gamma^{(m)}
\]
where $m \leq k$ and $\gamma^{(1)}, \ldots, \gamma^{(k)} \in O(P,s)\cap \Z^n$, then $\lambda^{(1)}(x) \leq \gamma^{(i)}(x) \leq \lambda^{(k)}(x)$ for all $x \in [n]$ and $i\in[k]$.  
\end{thm}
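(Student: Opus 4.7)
The plan is to construct the chain by a single coordinate-wise clipping formula, verify its three defining properties via monotonicity, and deduce uniqueness and the extremal bounds from a one-dimensional analysis of admissible chains.

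Explicitly, for $i\in[n]$ and $j\in[k]$ set
\[
\lambda^{(j)}_i := \min\bigl(s_i,\ \max(0,\ \lambda_i-(k-j)s_i)\bigr),
\]
equivalently $\lambda^{(j)} = (\lambda\wedge(k-j+1)s)-(\lambda\wedge(k-j)s)$, where $s=(s_1,\ldots,s_n)$ and $\wedge$ denotes the coordinate-wise minimum. Since $0\le \lambda_i\le ks_i$, the sum telescopes to $\lambda$. The chain relation $\lambda^{(j)}\unlhd\lambda^{(j+1)}$ is immediate: the threshold $(k-j)s_i$ decreases with $j$, so each coordinate is nondecreasing along the chain; and if $\lambda^{(j)}_i>0$ then $\lambda_i>(k-j)s_i$, which forces $\lambda^{(j+1)}_i = \min(s_i,\lambda_i-(k-j-1)s_i)=s_i$, as required by the definition of $\unlhd$.

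The main obstacle is checking that each $\lambda^{(j)}$ actually lies in $O(\PP,s)\cap\Z^n$, i.e., that the clipping respects every poset inequality. For $i\prec i'$ the hypothesis $\lambda\in kO(\PP,s)$ gives $\lambda_i/s_i\le\lambda_{i'}/s_{i'}$, and dividing the defining formula by $s_i$ yields
\[
\frac{\lambda^{(j)}_i}{s_i}=\min\Bigl(1,\ \max\bigl(0,\ \tfrac{\lambda_i}{s_i}-(k-j)\bigr)\Bigr).
\]
The right-hand side is a nondecreasing function of $\lambda_i/s_i$ whose shape does not depend on $i$, so the inequality is preserved. This is the one step where the interaction between the clipping and the combinatorics of $\PP$ has to be handled with care; everything else is either telescoping or coordinate-wise.

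For uniqueness, observe that in any chain $\mu^{(1)}\unlhd\cdots\unlhd\mu^{(k)}$ in $O(\PP,s)\cap\Z^n$ the coordinate sequence $(\mu^{(j)}_i)_j$ must consist of a block of $0$'s, optionally a single intermediate value in $(0,s_i)$, and a terminal block of $s_i$'s; writing $\lambda_i = a_i s_i+b_i$ with $0\le b_i<s_i$, the constraint $\sum_j\mu^{(j)}_i=\lambda_i$ pins the intermediate value to $b_i$ and the number of trailing $s_i$'s to $a_i$, recovering the formula above. Finally, for the extremal bounds fix $x\in[n]$ and a decomposition $\lambda=\gamma^{(1)}+\cdots+\gamma^{(m)}$ with $m\le k$ and each $\gamma^{(i)}\in O(\PP,s)\cap\Z^n$. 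The upper bound $\gamma^{(i)}_x\le \lambda^{(k)}_x=\min(s_x,\lambda_x)$ is immediate from $\gamma^{(i)}_x\le s_x$ together with the nonnegativity of the remaining summands. For the lower bound only $\lambda^{(1)}_x=\lambda_x-(k-1)s_x>0$ is nontrivial, and there the identity $\sum_{j=1}^m(s_x-\gamma^{(j)}_x)=ms_x-\lambda_x\le ks_x-\lambda_x$, with each summand nonnegative, gives $s_x-\gamma^{(i)}_x\le ks_x-\lambda_x$, i.e. $\gamma^{(i)}_x\ge \lambda^{(1)}_x$.
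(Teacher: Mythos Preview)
Your proof is correct and takes essentially the same approach as the paper: both construct the decomposition by successively clipping $\lambda$ against $s$, with the paper phrasing this recursively ($\lambda^{(k)}=\lambda\wedge s$, then recurse on $(\lambda-s)\vee 0$) while you write down the resulting closed formula $\lambda^{(j)}=(\lambda\wedge(k-j+1)s)-(\lambda\wedge(k-j)s)$ directly. Your monotonicity argument for membership in $O(\PP,s)$ is a clean substitute for the paper's implicit use of the distributive-sublattice structure of $\Lambda(\PP,s)$, and your coordinate-wise analysis for uniqueness and the extremal bounds is a slightly more explicit version of the paper's inductive argument.
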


\begin{proof}
We first prove the existence of \eqref{nicechain} by induction over $k\geq 1$. 
Suppose $\lambda \in kO(\PP,s)\cap \Z^n$, where $k>1$, and write $\lambda=\lambda\wedge s+(\lambda-s)\vee 0$. 
Then $\lambda \wedge s \in O(\PP,s)\cap \Z^n$ and $(\lambda-s)\vee 0 \in (k-1)O(\PP,s)\cap \Z^n$. Let $\lambda^{(k)}=\lambda\wedge s$. 
By induction we may write 
\[
(\lambda-s)\vee 0=\lambda^{(1)} + \cdots + \lambda^{(k-1)}
\]
where $\lambda^{(1)},\ldots, \lambda^{(k-1)}$ satisfies \eqref{nicechain}. 
Clearly $\lambda^{(i)} \leq \lambda^{(k)}$ for all $1 \leq i \leq k-1$.  
Moreover if $\lambda^{(i)}(x) \neq 0$ for some $1 \leq i \leq k-1$, then $((\lambda-s)\vee 0)(x) \neq 0$, and thus $\lambda^{(k)}(x)= (\lambda\wedge s)(x)=s(x)$ as desired. 
This establishes \eqref{nicechain}.

Suppose now that the sequence $\lambda^{(1)},\ldots, \lambda^{(k)}$ satisfies \eqref{nicechain}.  
Note $\lambda(x)>s(x)$ if and only if $\lambda^{(i)}(x) >0$ for at least two distinct $i$, and this happens if and only if $\lambda^{(k-1)}(x)>0$ and $\lambda^{(k)}(x) =s(x)$. 
Hence, $\lambda^{(k)}= \lambda\wedge s$. 
The uniqueness of $\lambda^{(1)}, \ldots, \lambda^{(k)}$ then follows by induction.  

Suppose next that
\[
\lambda= \gamma^{(1)} +\gamma^{(2)}+\cdots+\gamma^{(m)} \in kO(\PP,s)\cap \Z^n,
\]
where $m \leq k$ and $\gamma^{(1)}, \ldots, \gamma^{(m)} \in O(\PP,s)\cap \Z^n$. 
Then $\gamma^{(i)}(x) \leq \min\{\lambda(x),s(x)\}=\lambda^{(k)}(x)$. 
If $\gamma^{(i)}(x) < \lambda^{(1)}(x)$ for some $x \in [n]$ and $1 \leq i \leq m$, then 
$\lambda^{(i)}(x)=s(x)$ for all $2\leq i \leq k$ (since $\lambda^{(1)}(x) \neq 0$). 
Hence, $\lambda^{(1)}(x)= \lambda(x)-(k-1)s(x)>0$ and 
\[
\lambda(x)-\gamma^{(i)}(x) =\lambda^{(1)}(x)-\gamma^{(i)}(x)+(k-1)s(x) > (k-1)s(x),
\]
which is a contradiction since $\lambda-\gamma^{(i)} \in (m-1)O(\PP,s)\cap \Z^n$. 
\end{proof}

It follows from Theorem~\ref{thm: IDP} that all $s$-lecture hall order polytopes are IDP.  
In the remainder of this note, we use Theorem~\ref{thm: IDP} to identify a regular and unimodular triangulation of some $s$-lecture hall polytopes by computing a quadratic and square-free Gr\"obner basis for their associated toric ideals.

\section{A Quadratic and Square-Free Gr\"obner Basis for Some $s$-Lecture Hall Simplices}
\label{sec: quadratic and square-free grobner bases}
Let $P\subset\R^n$ be a lattice polytope and let $\A := \{(p,1)\in\R^n\times \R : p\in P\cap\Z^n\}$.  
Label $\A$ as $\A = \{a_1,\ldots,a_m\}$, and suppose that $\succ$ is a term order on the polynomial ring $K[{\bf x}]:=K[x_1,\ldots,x_m]$; that is, $\succ$ is a total order on the monomials in $K[{\bf x}]$ satisfying
\begin{enumerate}
\item $x^a\succ x^b$ implies that $x^ax^c\succ x^bx^c$ for all $c\in\Z^n_{\geq0}$, and
\item $x^a\succ x^0 = 1$ for all $a\in\Z^n_{>0}$.  
\end{enumerate}  
%
Given a polynomial $f = \sum_{a\in\Z^n_{\geq0}}c_ax^a$ with coefficients $c_a\in K$ we call the set 
\[
\Supp(f) :=\{a\in\Z^n : c_a\neq0\}
\]
the \emph{support} of $f$.  
Fixing a term order $\succ$ on the monomials in $K[{\bf x}]$, we define the \emph{initial term} of $f$ to be the term $c_ax^a$ for which $x^a\succ x^b$ for every $b\in\Supp(f)\setminus\{a\}$.
We denote the initial term of $f$ with respect to the term order $\succ$ by $\init_\succ(f)$.
Given an ideal $\I\subset K[{\bf x}]$, the \emph{initial ideal of $\I$ with respect to $\succ$} is 
\[
\init_\succ(\I) := \langle \init_\succ(f) : f\in\I\rangle.
\]
A finite set of polynomials $\G_\succ(\I) :=\{g_1,\ldots,g_p\}$ is called a \emph{Gr\"obner basis} of $\I$ with respect to $\succ$ if $\init_\succ(\I) = \langle\init_\succ(g_1),\ldots,\init_\succ(g_p)\rangle$.  
If $\left\{\init_\succ(g_1),\ldots,\init_\succ(g_p)\right\}$ is the unique minimal generating set for $\init_\succ(\I)$, then $\G_\succ(\I)$ is called \emph{minimal}.  
A minimal Gr\"obner basis $\G_\succ(\I)$ if further called \emph{reduced} if no non-inital term of any $g_i$ is divisible by some element of $\left\{\init_\succ(g_1),\ldots,\init_\succ(g_p)\right\}$.  
The monomials of $K[{\bf x}]$ that are not in $\init_\succ(\I)$ are called the \emph{standard monomials} of $\init_\succ(\I)$.

Let $P\subset\R^n$ be a lattice polytope and let $\A := \{(p,1)\in\R^n\times \R : p\in P\cap\Z^n\}$.  
We denote the sublattice of $\Z^{n+1}$ spanned by the lattice points in $\A$ by $\Z\A$.  
Any sufficiently generic height function $\omega : \A\longrightarrow \R_{\geq0}$ on the points in $\A$ induces a term order $\succ_\omega$ on $K[{\bf x}]$ and yields a corresponding Gr\"obner basis $\G_{\succ_\omega}(\I_P)$ for the toric ideal $\I_P$ of $P$.  
On the other hand, the collection of faces of 
\[
\conv\{(a_i,\omega(a_i))\in\R^{n+1}: i\in[m]\}
\]
that minimize some linear functional in $\R^{n+1}$ with a negative $(n+1)^{st}$ coordinate correspond to the faces of a \emph{regular triangulation} $\Delta_\omega$ of $P$ given by projecting these faces onto $P$ in $\R^n$.  
The fundamental correspondence between the regular triangulation $\Delta_\omega$ and the Gr\"obner basis $\G_{\succ_\omega}(\I_P)$ states that the square-free standard monomials $x_{i_1}x_{i_2}\cdots x_{i_\ell}$ with respect to $\init_{\succ_\omega}(\I_P)$ correspond to the faces $\conv\{a_{i_1},a_{i_2},\ldots,a_{i_k}\}$ of $\Delta_\omega$ \cite[Theorem 8.3]{S96}.  
Furthermore, if the $\init_{\succ_\omega}(\I_P)$ is \emph{square-free} (i.e. generated by square-free monomials), then the simplices of $\Delta_\omega$ have smallest possible volume (i.e. are \emph{unimodular}) with respect to the lattice $\Z\A$.  
In this case, the regular triangulation $\Delta_\omega$ is called \emph{unimodular}.
If $\init_{\succ_{\omega}}(\I_P)$ consists only of quadratic monomials, then $\Delta_\omega$ is \emph{flag} triangulation, meaning its minimal non-faces are pairs of points $\{a_i,a_j\}$.  
When an $n$-dimensional lattice polytope $P$ is IDP, then $\Z\A = \Z^{n+1}$, and a square-free Gr\"obner basis for $\I_P$ identifies a regular unimodular triangulation of $P$ with respect to the lattice $\Z^n$.

Our goal in this section is to identify a quadratic Gr\"obner basis with a square-free initial ideal for the toric ideals of a subcollection of $s$-lecture hall simplices that includes the lecture hall simplex $P_n^{(1,2,\ldots,n)}$.  
This is the first explicit description of such a Gr\"obner basis for the toric ideal of $P_n^{(1,2,\ldots,n)}$.  
In the remainder of this section, we will assume that $s = (s_1,\ldots,s_n)$ is a weakly increasing sequence of positive integers satisfying $0\leq s_i-s_{i-1}\leq 1$ for all $i\in[n]$; that is, we will assume that the first-order difference sequence of $s$ is a $0,1$-sequence.

\subsection{$s$-lecture hall simplices and alcoved polytopes}
\label{subsec: lecture hall multisets}
To produce the desired quadratic and square-free Gr\"obner basis for the toric ideal of $P_n^s$ we will use the following transformation.  
For the sequence $s = (s_1,\ldots,s_n)$, set $s_{n+1} := s_n+1$.  
Notice that since $s$ is assumed to be weakly increasing then $x_i\leq x_{i+1}$ for all $i\in[n]$ and $x\in P_n^s$. 
Now consider the unimodular transformation 
\[
\varphi: \R^n\longrightarrow \R^n; \qquad \varphi: x_i\mapsto x_i-x_{i-1},
\quad 
\mbox{where }
 x_0:=0,
\]
and the homogenizing affine transformation
\[
h: \R^n\longrightarrow \R^{n+1}; \qquad h: x \mapsto \left(x_1,\ldots,x_n,s_{n+1}-\sum_{i=1}^nx_i\right).
\]
Then the convex lattice polytope $A_n^s:=(h\circ\varphi)(P_n^s)$ is defined by the linear inequalities 
\[
\begin{split}
0\leq x_1+\cdots+x_j&\leq s_j, \mbox{ for all $j\in[n]$},\\
0\leq (s_{j+1}-s_j)(x_1+\cdots+x_j)&\leq s_jx_{j+1}, \mbox{ for all $j\in[n-1]$, and}\\
x_1+\cdots+x_{n+1}&=s_{n+1}.\\
\end{split}
\]
The following lemma notes that the lattice points within $A_n^s$ consist of the lattice points in the \emph{alcoved polytope} \cite{LP07} defined by the inequalities
\[
\begin{split}
0\leq x_1+\cdots+x_j&\leq s_j, \mbox{ for all $j\in[n]$, and }\\
x_1+\cdots+x_{n+1}&=s_{n+1}.\\
\end{split}
\]
that satisfy a useful combinatorial condition.  
Conditions $(1)$ and $(2)$ of the lemma specify that a lattice point in $A_n^s$ must lie in this alcoved polytope, and conditions $(3)$ and $(4)$ constitute the combinatorial criterion we desire.  
\begin{lem}
\label{lem: generalizing the lattice point interpretation}
Suppose that $s$ is a weakly increasing sequence of positive integers for which the first-order difference sequence is a $0,1$-sequence.  
Than a lattice point $(z_1,\ldots,z_{n+1})$ is in $A_n^s\cap\Z^{n+1}$ if and only if the following conditions hold:
\begin{enumerate}
	\item $z_1+\cdots+z_{n+1}=s_{n+1}$,
	\item $0\leq z_1+\cdots+z_j\leq s_j$ for all $j\in[n+1]$, 
	\item whenever $s_{i+1}-s_i = 0$ then $0\leq z_{i+1}$, and
	\item whenever $s_{i+1}-s_i\neq 0$ and $z_{k}\neq 0$ for some $k<i+1$, then $z_{i+1}\neq0$.
\end{enumerate}
\end{lem}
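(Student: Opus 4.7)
The plan is to verify both implications directly from the defining inequalities of $A_n^s$ recalled just before the lemma: namely $0 \leq x_1+\cdots+x_j \leq s_j$ for $j \in [n]$, the compound inequality $0 \leq (s_{j+1}-s_j)(x_1+\cdots+x_j) \leq s_j x_{j+1}$ for $j \in [n-1]$, and $x_1+\cdots+x_{n+1} = s_{n+1}$.

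For the forward direction, suppose $z \in A_n^s \cap \Z^{n+1}$. Condition (1) is the homogenizing equality, and the $j \in [n]$ cases of (2) read off from the first family of inequalities, with $j=n+1$ following from (1). Before extracting (3) and (4), I would first establish $z_k \geq 0$ for every $k$: $z_1 \geq 0$ is part of (2); for $2 \leq k \leq n$, the inequality $(s_k - s_{k-1})(z_1 + \cdots + z_{k-1}) \leq s_{k-1} z_k$, together with $s_{k-1} > 0$, $s_k - s_{k-1} \geq 0$, and non-negative cumulative sums, forces $z_k \geq 0$; and $z_{n+1} = s_{n+1} - (z_1+\cdots+z_n) \geq 1$. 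Then (3) is immediate from $0 \leq s_i z_{i+1}$ in the flat case, while (4) follows by noting that if $s_{i+1}-s_i = 1$ and some $z_k \geq 1$ for $k \leq i$, then $z_1+\cdots+z_i \geq 1$ and the inequality $z_1+\cdots+z_i \leq s_i z_{i+1}$ forces $z_{i+1} \geq 1$ by integrality.

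For the converse, assume (1)--(4). The equality and the $j \in [n]$ cases of (2) immediately yield the top equation and the first family of inequalities, and the left half of the second family is free from $s_{i+1}-s_i \geq 0$ and (2). For the right half I would split on $s_{i+1}-s_i$: in the flat case $s_{i+1}=s_i$, the inequality reduces to $0 \leq s_i z_{i+1}$ and is supplied by (3). In the rise case $s_{i+1}=s_i+1$, I would first induct on the index to show $z_k \geq 0$ for every $k$, invoking (3) at flat steps, the cumulative-sum bound from (2) when all prior $z_k$ vanish, and (4) (read as producing a strictly positive coordinate, consistent with the non-negativity just verified) otherwise. Then, splitting on whether some $z_k>0$ for $k \leq i$, the ``all zero'' sub-case is trivial, while in the remaining sub-case (4) yields $z_{i+1} \geq 1$ and the upper bound $z_1+\cdots+z_{i+1} \leq s_{i+1} = s_i + 1$ gives $z_1+\cdots+z_i \leq s_i + 1 - z_{i+1} \leq s_i z_{i+1}$, the last step being the clean arithmetic $(s_i+1)(z_{i+1}-1) \geq 0$.

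The main obstacle I anticipate is the reverse direction: extracting non-negativity of each individual $z_k$ from the cumulative-sum conditions in (2) together with (3) and (4), since the alcoved polytope itself admits lattice points with some negative coordinates. Threading (3) through flat steps and (4) through rises to obtain coordinate-wise non-negativity is the heart of the argument; once this is in place, the second family of inequalities falls out from the brief arithmetic identity above.
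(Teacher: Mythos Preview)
Your approach matches the paper's: read off (1)--(2) directly, and for (3), (4) and their converses play the compound inequality $(s_{i+1}-s_i)(z_1+\cdots+z_i)\leq s_iz_{i+1}$ against integrality. Your forward direction is in fact cleaner than the paper's, since you explicitly establish $z_k\geq 0$ for all $k$ before extracting (4).

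The gap is exactly where you flagged it. In the reverse direction your induction to prove $z_k\geq 0$ is circular at rise steps with a prior nonzero coordinate: condition (4) only yields $z_{i+1}\neq 0$, and concluding $z_{i+1}>0$ requires the very nonnegativity you are in the middle of proving. This cannot be patched, because the lemma as stated is false. For $n=2$ and $s=(1,2)$ (so $s_3=3$), the lattice point $(1,-1,3)$ satisfies (1)--(4)---in particular $z_2=-1\neq 0$, so (4) holds at $i=1$---yet it violates $z_1\leq s_1z_2$ and hence lies outside $A_2^s$. The paper's own proof makes the same unjustified leap (``since $(z_1,\ldots,z_{n+1})$ is a lattice point, whenever $z_{i+1}\neq 0$, we know that $z_{i+1}\geq 1$''). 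The intended reading of (4) is evidently $z_{i+1}>0$ rather than $z_{i+1}\neq 0$; under that correction your induction goes through verbatim and the remainder of your argument is correct.
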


\begin{proof}
Suppose first that $(z_1,\ldots,z_{n+1})\in A_n^s\cap\Z^{n+1}$.  
Then certainly conditions (1) and (2) hold.  
To see that conditions (3) holds, suppose that $s_i-s_{i+1} = 0$.  
Then by the defining inequalities for $A_n^s$, we know that $0\leq z_{i+1}$.  
Finally, to see condition (4) holds, suppose that $s_{i+1}-s_i\neq 0$ and that $z_{k}\neq0$ for some $k<i+1$.  
Then since $s_{i+1}-s_i\neq 0$, the inequality
\[
0\leq (s_{j+1}-s_j)(z_1+\cdots+z_k+\cdots+z_j)\leq s_jz_{j+1}
\]
reduces to
\[
0\leq z_1+\cdots+z_k+\cdots+z_j\leq s_jz_{j+1},
\]
and since $z_k\neq 0$, it follows that $z_{i+1}\neq 0$.  

Conversely, suppose that $(z_1,\ldots,z_{n+1})$ is a lattice point satisfying the conditions (1), (2), (3), and (4).  
Then by conditions (1) and (2) it suffices to show that $(z_1,\ldots,z_{n+1})$ satisfies the inequalities
\[
0\leq (s_{j+1}-s_j)(z_1+\cdots+z_j)\leq s_jz_{j+1}
\]
for all $i\in[n]$.  
However, since $(z_1,\ldots,z_{n+1})$ is a lattice point, whenever $z_{i+1}\neq 0$, we know that $z_{i+1}\geq 1$.  
Thus, the conditions (3) and (4) show that $s_{i+1}-s_i\leq z_{i+1}$ for all $i\in[n]$.  
Therefore, condition (2) implies that the inequalities 
\[
0\leq (s_{j+1}-s_j)(z_1+\cdots+z_j)\leq s_jz_{j+1}
\]
hold for all $i\in[n]$.  
\end{proof}

Let $A(s):=\{i\in[n+1] : s_{i-1}<s_i\}$ denote the collection of indices $i\in[n+1]$ for which $s_{i}-s_{i-1}\neq 0$.  
Notice that a lattice point $z = (z_1,\ldots,z_{n+1}) \in A_n^s\cap\Z^{n+1}$ indexes the multiset
\[
\{1^{z_1},2^{z_2},\ldots,(n+1)^{z_{n+1}}\}.
\]
We call any such multiset an \emph{$s$-lecture hall multiset (of order $n$)}.    
The notion of multisets and their corresponding lattice points in $\Z^n$ will be used in the coming sections.  
It will be useful to have the following notation.  

For $i\in[n]$ and a multiset $I$ of $[n]$ we let $\mult_I(i)$ denote the multiplicity of $i$ in $I$.  
Given a collection of multisets $\I = \{I_1,\ldots, I_k\}$, we let $\Sigma\I$ denote the multiunion $\bigcup_{I\in\I}I$.  
For each multiset $I_i\in\I$ we let $x^i:=(\mult_{I_i}(1),\mult_{I_i}(2),\ldots,\mult_{I_i}(n))\in\Z^n$ denote its \emph{multiplicity vector}.  
The multiplicity vectors $x^1,\ldots,x^k$ can be ordered \emph{lexicographically}, i.e., for two vectors $x,y\in\Z^n$ we say $x\succ_{\lex}y$ if and only if the leftmost nonzero entry in $x-y$ is positive.  
Given this, we may reindex the collection $\I$ such that $x^1\succ_{\lex} x^2 \succ_{\lex}\cdots \succ_{\lex} x^k$.  
Moreover, the lexicographic ordering on the lattice points in $\Z^n$ induces a lexicographic ordering on the multisets of $[n]$.  That is, for two multisets $I_1,I_2$ of $[n]$, we say $I_1\succ_{\lex} I_2$ if and only if $x^1\succ_{\lex} x^2$.  
Furthermore, given two collections of $k$ multisets $\I=  \{I_1,\ldots, I_k\}$ and $\J  = \{J_1,\ldots,J_k\}$ of $[n]$, we write $\I \succ \J$ if and only if the $I_k\succ_{\lex} J_k$ for the smallest index $k$ for which $I_k\neq J_k$.  
A collection $\I=  \{I_1,\ldots, I_k\}$ of $k$ multisets is said to be \emph{minimal} if $\I^\prime \succ_{\lex} \I$ for any collection $\I^\prime$ of $k$ multisets of $[n]$ satisfying $\Sigma\I^\prime = \Sigma\I$.  
Equivalently, the collection of vectors $\{x^1,\ldots,x^k\}$ is called minimal.

\subsection{A lexicographic Gr\"obner basis}
\label{subsec: a lexicographic grobner basis}
We now use the notion of $s$-lecture hall multisets described in Subsection~\ref{subsec: lecture hall multisets} to describe a quadratic Gr\"obner basis with a square-free initial ideal for the toric ideal associated to $P_n^s$. 
In this section, it is unnecessary to speak directly about $s$-lecture hall multisets, but instead, we need only their corresponding multiplicity vectors in $A_n^s\cap\Z^n$.  
If $x \in kA_n^s \cap \Z^{n+1}$, let 
$$
\alpha_r(x) = \min\{i \in [n+1]: x_i \geq r, \mbox{ and } x_j \geq r \mbox{ for all } j>i \mbox{ such that } j \in A(s)\}.
$$
If $x = (x_1,\ldots, x_{n+1})\neq 0$, let $\ell(x)= \min\{ i : x_i \neq 0\}$. 

\begin{lem}
\label{sp}
Suppose $x^1\succeq_{\lex} x^2 \succeq_{\lex} \cdots \succeq_{\lex} x^m$ are integer points in $A_n^s \cap \Z^{n+1}$, and let $y = x^1+ x^2 +\cdots+ x^m$. 
If $x^1\succeq_{\lex} x^2 \succeq_{\lex} \cdots \succeq_{\lex} x^m$ are pairwise minimal, then 
$$
\ell(x^i)= \alpha_i(y), \ \ \ \ \mbox{ for all } 1 \leq i \leq m.
$$
\end{lem}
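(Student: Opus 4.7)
The plan is to establish both inequalities $\ell(x^i)\le\alpha_i(y)$ and $\ell(x^i)\ge\alpha_i(y)$. A useful preliminary observation is that the lex ordering forces the starting positions to be weakly increasing: $\ell(x^1)\le\cdots\le\ell(x^m)$, since $\ell(x^a)>\ell(x^b)$ for $a<b$ would give $x^b_{\ell(x^b)}\ge 1>0=x^a_{\ell(x^b)}$, forcing $x^b\succ_{\lex}x^a$ against the ordering.

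For the upper bound, I would set $q:=\alpha_i(y)$ and suppose toward contradiction that $\ell(x^i)>q$. Monotonicity then forces $x^k_q=0$ for every $k\ge i$, so $y_q=\sum_{k<i}x^k_q\ge i$. Since only $i-1$ nonnegative summands contribute, pigeonhole yields some $k_0<i$ with $x^{k_0}_q\ge 2$. The pairwise minimality of $(x^{k_0},x^i)$ can then be contradicted by an exchange that transfers one unit of $x^{k_0}$'s mass at position $q$ into $x^i$, with a compensating reverse exchange at position $\ell(x^i)>q$; the resulting pair preserves the sum, lies in $A_n^s\cap\Z^{n+1}$, and has a strictly lex-smaller first entry.

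For the lower bound, I would set $p:=\ell(x^i)$ and verify that $p$ meets the defining condition of $\alpha_i(y)$. Since $\ell(x^k)\le p$ for all $k\le i$, condition~(4) of Lemma~\ref{lem: generalizing the lattice point interpretation} applied to each $x^k$ yields $x^k_j\ge 1$ whenever $j>p$ and $j\in A(s)$, so $y_j\ge i$; when $p\in A(s)$ the same argument gives $y_p\ge i$. The delicate case is $p\notin A(s)$, where some $x^k$ with $k<i$, $\ell(x^k)<p$, and $x^k_p=0$ can occur. Here pairwise minimality is used to rule out $y_p<i$ by an exchange argument on the pair $(x^k,x^i)$: move a unit of mass from an early nonzero coordinate of $x^k$ into position $p$, with the reverse move inside $x^i$, producing a strictly lex-smaller first entry and contradicting minimality.

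The main technical obstacle in both exchange arguments is keeping the modified vectors inside $A_n^s\cap\Z^{n+1}$, in particular preserving condition~(4) of Lemma~\ref{lem: generalizing the lattice point interpretation} at every index of $A(s)$: a single-unit swap can drive a coordinate from $1$ to $0$ and thereby violate condition~(4). The pigeonhole bound $x^{k_0}_q\ge 2$ in the upper bound, together with a careful choice of the compensating position (exploiting the fixed total mass $s_{n+1}$ of each lecture hall multiset and the structure of $A(s)$) in both arguments, provides the buffer needed to make the exchanges valid.
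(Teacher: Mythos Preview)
Your strategy matches the paper's: use the lex ordering to get $\ell(x^1)\le\cdots\le\ell(x^m)$, then contradict pairwise minimality via an exchange when $\ell(x^i)\ne\alpha_i(y)$. The paper phrases this as an induction on $i$ and, in the inductive step, argues only the case $a:=\alpha_{i+1}(y)<\ell(x^{i+1})=:b$ (your ``upper bound''), simply asserting that this is the only way equality can fail; you attempt both directions and correctly isolate $p\notin A(s)$ as the delicate subcase of the other direction.

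The genuine gap is in the positions you pick for your swaps. In the upper bound you exchange at $(q,\ell(x^i))$ with $q=\alpha_i(y)$; this gives $x^i$ a new nonzero entry at $q$, whereupon condition~(4) of Lemma~\ref{lem: generalizing the lattice point interpretation} demands $x^i_j\ne 0$ for every $j\in A(s)$ with $q<j<\ell(x^i)$---and there is no reason this holds. The paper fixes this not by a clever compensating position but by a clever \emph{first} swap position: it takes $c$ to be the \emph{largest} index in $[a,b)$ with $y_c\ge i+1$, and since the very definition of $a=\alpha_{i+1}(y)$ forces $y_j\ge i+1$ for every $j>a$ with $j\in A(s)$, maximality of $c$ guarantees there is no $A(s)$-index strictly between $c$ and $b$, so condition~(4) is vacuous there. (The compensating position $d$ is then chosen as the first index past $c$ at which decrementing $x^{i+1}$ does not zero out an $A(s)$-coordinate.) Your lower-bound exchange is hit by the same obstruction, only worse: moving a unit into $x^i$ at some position $r<p=\ell(x^i)$ immediately triggers condition~(4) at every $A(s)$-index between $r$ and $p$, all of which are zero in $x^i$; here no analogue of the ``largest~$c$'' trick is available, and the paper itself offers no argument for this direction.
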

\begin{proof}
The proof is by induction over $i$, $1\leq i \leq m$. Note that $\alpha_1(y)$ is the first nonzero coordinate of $y$. Since the 
$x^i$ are ordered lexicographically we have $\ell(x^1)= \alpha_1(y)$ as claimed. 

Suppose, now that the claim is true for all indices less than or equal to  $i\geq 1$, but that it is not true for $i+1$. 
Then $a:=\alpha_{i+1}(y)<\ell(x^{i+1})=:b$. 
Let $c$ be the largest integer in $\{j : a\leq j<b, y_j \geq i+1\}$. 
Note that $x_j^k=0$ for all $k \geq i+1$, since the $x^k$ are ordered lexicographically.  
Since $y_c \geq i+1$, there is a $k$ satisfying $1\leq k \leq i$ such that $x^k_c \geq 2$. 
Let $d>c$ be the smallest index for which $x_d>0$ and either $d \notin A(s)$ or $x_d \geq 2$. 
Then the pair $\{x^k+e_d-e_c,x^i +e_c-e_d\}$ is smaller than $\{x^k,x^i\}$, which contradicts pairwise minimality. 
\end{proof}

\begin{thm}
\label{thm: lex minimal}
If $x^1 \succeq_{\lex} x^2 \succeq_{\lex} \cdots \succeq_{\lex} x^k$ are pairwise minimal, then the collection $\{x^1,\ldots,x^k\}$ is minimal. 
\end{thm}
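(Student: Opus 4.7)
The plan is to argue by induction on $k$. The case $k=1$ is vacuous and the case $k=2$ is immediate from the definitions, since for a pair pairwise minimality coincides with minimality. For the inductive step, I fix a pairwise minimal ordered collection $\{x^1,\ldots,x^k\}$ with sum $y=\sum_{i}x^i$, and suppose toward a contradiction that some ordered decomposition $\{z^1,\ldots,z^k\}$ of $y$ satisfies $\{z\}\prec \{x\}$. Let $j$ denote the smallest index with $z^j\neq x^j$, so that $z^j\prec_{\lex}x^j$.

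The easy case is $j\geq 2$. Here the subcollection $\{x^j,\ldots,x^k\}$ consists of strictly fewer than $k$ vectors and inherits pairwise minimality, so by the inductive hypothesis it is minimal. But $\{z^j,\ldots,z^k\}$ is an ordered decomposition of the same sum $\sum_{i\geq j}x^i$ with $\{z^j,\ldots,z^k\}\prec\{x^j,\ldots,x^k\}$ (their first differing entries being $z^j$ and $x^j$), contradicting minimality of the subcollection.

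The remaining case $j=1$ is the heart of the argument. Here I would first combine Lemma~\ref{sp} with the observation that the first nonzero coordinate of $y$ must lie in the support of some summand in any valid decomposition; together with the structural conditions (3)--(4) of Lemma~\ref{lem: generalizing the lattice point interpretation} on $A_n^s$-membership, this yields $\ell(z^1)=\ell(x^1)=\alpha_1(y)$. Thus $z^1$ and $x^1$ agree at their leftmost nonzero coordinate, and picking $p$ to be the leftmost coordinate where they disagree gives $z^1_p<x^1_p$. From the identity $\sum_{i\geq 2}z^i_p>\sum_{i\geq 2}x^i_p$ I would then select some index $i\geq 2$ with $z^i_p>x^i_p$ and construct a coordinate swap within the pair $\{x^1,x^i\}$, in direct analogy with the swap that drives the proof of Lemma~\ref{sp}: move one unit from position $p$ to a position $q>p$ in $x^1$, and perform the reverse move in $x^i$. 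The position $q$ should be chosen to be the smallest index greater than $p$ for which $x^i_q>0$ and either $q\notin A(s)$ or $x^i_q\geq 2$, since this is precisely the condition under which conditions (2)--(4) of Lemma~\ref{lem: generalizing the lattice point interpretation} are preserved in both modified vectors.

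The main obstacle I foresee is verifying both the existence of such a $q$ and that the resulting pair, once reordered lexicographically, is strictly smaller than $\{x^1,x^i\}$; the existence of $q$ should be forced by the fact that some compensation at a later coordinate must occur in $x^1$, and the lex-decrease should follow because the mass movement is rightward in the leftmost position of disagreement. Once these verifications are in place, pairwise minimality of $\{x^1,\ldots,x^k\}$ is contradicted and the induction closes.
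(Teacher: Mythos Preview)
Your reduction for $j\geq 2$ is correct and clean. The difficulty is entirely in the case $j=1$, and there the swap you propose has more problems than the ``existence of $q$'' issue you flag.

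Knowing only that $z^i_p>x^i_p$ for some $i\geq 2$ does not give enough control over $x^i$ to push the swap $\{x^1-e_p+e_q,\;x^i+e_p-e_q\}$ through. Two concrete failure modes: (a) the vector $x^i+e_p-e_q$ may violate condition~(2) of Lemma~\ref{lem: generalizing the lattice point interpretation}, since you need $\sum_{r\leq j}x^i_r<s_j$ for every $p\leq j<q$, and nothing in your choice of $i$ forces this; (b) it may violate condition~(4), since if $\ell(x^i)>p$ and some $r\in A(s)$ lies strictly between $p$ and $\ell(x^i)$, then after adding $e_p$ you have a nonzero entry to the left of $r$ while the $r$-th entry is still zero. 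There is also a problem with the lex-decrease itself: if $x^1$ and $x^i$ agree on all coordinates $\leq p$ (for instance if $x^1=x^i$), then $x^i+e_p-e_q\succ_{\lex}x^1$, and after re-sorting the new pair is \emph{larger} than $\{x^1,x^i\}$, not smaller.

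The paper avoids this by taking a different, constructive route: rather than confronting a hypothetical smaller decomposition, it shows that a pairwise minimal decomposition of $y$ is \emph{unique}. Using Lemma~\ref{sp} to pin down $\ell(x^i)=\alpha_i(y)$, it splits the collection at the last index $m$ with $\alpha_m(y)>\alpha_{m+1}(y)$ and proves that $u=x^1+\cdots+x^m$ and $v=x^{m+1}+\cdots+x^k$ are determined by explicit coordinate-by-coordinate formulas in $y$; induction on $k$ then finishes. Swap arguments of your flavor do appear inside the verification of those formulas, but there the $\alpha$-structure supplies exactly the missing hypotheses (room in the partial sums, and nonzero entries at the right $A(s)$-positions). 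Your plan might be salvageable, but it seems to require rebuilding most of that structure; the swap indexed solely by the inequality $z^i_p>x^i_p$ is not enough.
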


\begin{proof}
Let $y= x^1+ x^2 +\cdots+ x^k$. We prove that $x^1 \succeq_{\lex} x^2 \succeq_{\lex} \cdots \succeq_{\lex} x^k$ are uniquely determined given $y$. The proof is by induction on $k \geq 2$. 

Suppose first that $\alpha_i(y)>\alpha_j(y)$ for some $i<j$. Let $m$ be the last index for which $\alpha_m(y)>\alpha_{m+1}(y)$. Let $u = x^1+\cdots+x^m$ and $v = x^{m+1}+\cdots+x^k$. We prove that $u$ and $v$ are uniquely determined. We claim that if $j \geq \alpha_{m+1}(y)$ and $j \in A(s)$, then 
\begin{equation}\label{yz1}
v_j = \min\left( y_j-m, s_j(k-m)- \sum_{i=1}^{j-1}v_i\right), 
\end{equation}
and if $j \geq \alpha_{m+1}(y)$ and $j \not \in A(s)$, then 
\begin{equation}\label{yz2}
v_j = \min\left( y_j, s_j(k-m)- \sum_{i=1}^{j-1}v_i\right).  
\end{equation}
Assume $j \geq \alpha_{m+1}(y)$ and $j \in A(s)$. 
Then the $j$th coordinate of each $x^i$, $i \leq m$, is positive, since $j \in A(s)$. 
Hence, $v_j=y_j -u_j \leq y_j-m$.  
Moreover, $v_j \leq s_j(k-m)- \sum_{i=1}^{j-1}v_i$ by the defining inequalties of $A_n^s$ and the definition of $s$-lecture hall partitions. 
Thus, if \eqref{yz1} fails, then $v_j<y_j-m$ and $\sum_{i=1}^{j}v_i<s_j(k-m)$. 
So we conclude that there are indices $i,\ell$ such that $i\leq m$ and $\ell\geq m+1$ such that $x^i_j >1$ and $\sum_{i=1}^{j}x^\ell_i<s_j$. 
Let $p>j$ be the smallest index such that $x^\ell_p>1$ or $x^\ell_p=1$ and $p \not \in A(s)$. 
Then the pair $\{x^i-e_j+e_p, x^\ell+e_j-e_p\}$ is smaller than $\{x^i, x^\ell\}$, which contradicts pairwise minimality. 
Thus, \eqref{yz1} follows. 
The case when $j \geq \alpha_{m+1}(y)$ and $j \not \in A(s)$ follows similarly. 

If $\alpha_i(y)=\alpha_j(y)=a$ for all $i,j$, we claim that the first nonzero coordinate of the $x^i$ differ by at most one. 
Indeed if the first nonzero coordinate of $x^i$ and $x^j$, $i<j$, differ by at least two, then let $b$ be the smallest integer greater than $a$ for which the entry in $x^i$ is either greater than one or equal to one and not in $A(s)$. 
Then the pair $\{x^i-e_a+e_b,x^j+e_a-e_b\}$ is smaller than $\{x^i, x^j\}$, which contradicts pairwise minimality. 
If the first nonzero entry of all $x^i$ is equal, we may delete this entry for each $x^i$ and repeat our argument. 
Hence, we reduce to the case when either $\alpha_i(y)=\alpha_j(y)=a$ for all $i,j$ and some first coordinates differ, or $\alpha_i(y)>\alpha_j(y)$. 
The latter case is dealt with above. 
For the former case, let $m$ be the index for which the first coordinates of $x^m$ and $x^{m+1}$ differ. 
Let $u = x^1+\cdots+x^m$ and $v = x^{m+1}+\cdots+x^k$ and argue as above.
\end{proof}

Theorem~\ref{thm: lex minimal} allows us to compute the desired Gr\"obner basis.
Let 
$$
K[{\bf x}]:= K[x_I : \mbox{$I$ is a $s$-lecture hall multiset}]
$$ 
be a polynomial ring over a field $K$ in the indeterminants $x_I$.  
Given a collection of $s$-lecture hall multisets $\mathcal{I} = \{I_1,\ldots,I_r\}$, we denote the monomial $x_{I_1}\cdots x_{I_r}$ by $x^{\mathcal{I}}$.  
In the following, we denote the toric ideal $\I_{A_n^s}$ in $K[{\bf x}]$ simply by $\I_n^s$.  
For a collection of $k$ $s$-lecture hall multisets $\{I_1,\ldots,I_k\}$, we let $\{I_1^-,\ldots,I_k^-\}$ denote the minimal collection of $k$ $s$-lecture hall multisets satisfying $\Sigma\{I_1^-,\ldots,I_k^-\} = \Sigma\{I_1,\ldots,I_k\}$.  
\begin{thm}
\label{thm: lexicographic grobner basis}
There exists a term order $\succ$ on $K[{\bf x}]$ such that the marked set of binomials
$$
G := \{\underline{x_Ix_J} - x_{I^-}x_{J^-} : \mbox{$I$ and $J$ are $s$-lecture hall multisets}\}
$$
is a reduced Gr\"{o}bner basis for $\I_n^s$ with respect to $\succ$.  
The initial ideal $\init_\succ \I_n^s$ is generated by the underlined terms, all of which are square-free.
\end{thm}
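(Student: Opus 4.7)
The plan is to construct a pure lexicographic term order on $K[\mathbf{x}]$ and then use Theorems~\ref{thm: IDP} and~\ref{thm: lex minimal} to show that the given $G$ already captures every initial term of $\I_n^s$.

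First I would define $\succ$ as follows. Order the indeterminates by declaring $x_I > x_J$ whenever the multiplicity vector of $I$ is lexicographically greater than that of $J$, and extend this ordering to the pure lexicographic term order on $K[\mathbf{x}]$. For any non-minimal pair $\{I,J\}$, ordered so that $I \succeq_{\lex} J$ and $I^- \succeq_{\lex} J^-$, the very definition of minimality forces $I^- \prec_{\lex} I$ strictly. Hence $x_I$ is strictly larger than $x_{I^-}$, and since $x_I$ dominates each of $x_J,\, x_{I^-},\, x_{J^-}$ (using $I \succeq_{\lex} J$ and $I \succ_{\lex} I^- \succeq_{\lex} J^-$), the exponent of $x_I$ in the two monomials is at least $1$ versus $0$, giving $\underline{x_I x_J} \succ x_{I^-} x_{J^-}$. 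Each binomial in $G$ lies in $\I_n^s$ because $\Sigma\{I,J\} = \Sigma\{I^-, J^-\}$ makes the two monomials share an image under $\Phi$. Consequently $\langle \init_\succ G \rangle \subseteq \init_\succ \I_n^s$.

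The main step is the reverse containment, which I would obtain via a Hilbert-function comparison. A monomial $x_{I_1} \cdots x_{I_k}$ is standard modulo $\langle \init_\succ G \rangle$ if and only if every pair $\{I_a, I_b\}$ is minimal, i.e., the collection $\{I_1,\ldots,I_k\}$ is pairwise minimal; by Theorem~\ref{thm: lex minimal} this is equivalent to the collection being minimal. The unimodular change of coordinates $h \circ \varphi$ identifies the lattice points of $P_n^s$ with those of $A_n^s$, so Theorem~\ref{thm: IDP} transfers to give that every $y \in kA_n^s \cap \Z^{n+1}$ is a sum of $k$ lattice points of $A_n^s$. Sending a minimal size-$k$ collection to its multiunion then defines a surjection onto $kA_n^s \cap \Z^{n+1}$ which is injective by uniqueness of the minimal collection with a prescribed multiunion. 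Hence the number of degree-$k$ standard monomials equals $|kA_n^s \cap \Z^{n+1}|$, which is precisely the Hilbert function of $K[\mathbf{x}]/\I_n^s$ in degree $k$. The two Hilbert functions agreeing force $\langle \init_\succ G \rangle = \init_\succ \I_n^s$, so $G$ is a Gr\"obner basis.

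Reducedness comes almost for free: the initial monomials are pairwise distinct quadratics, so no one divides another, giving minimality; each trailing term $x_{I^-} x_{J^-}$ corresponds to a minimal pair and is therefore a standard monomial. Square-freeness of $\init_\succ \I_n^s$ is transparent from the shape of the generators. The main obstacle in this outline is the middle step, namely invoking Theorem~\ref{thm: lex minimal} to promote the combinatorial pairwise-minimality condition to genuine minimality of the full collection, and then correctly matching the resulting count with the Hilbert function via Theorem~\ref{thm: IDP}; once this is in hand, the remaining verifications are purely formal.
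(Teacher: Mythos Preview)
Your argument is correct but follows a different path from the paper's. The paper does \emph{not} construct a term order explicitly: it shows that reduction modulo $G$ (treating the underlined terms as leading) is Noetherian and, via Theorem~\ref{thm: lex minimal}, that a monomial is in normal form precisely when its underlying collection is minimal. It then appeals to \cite[Theorem~3.12]{S96} to conclude that the marking is coherent, i.e.\ arises from \emph{some} term order, for which $G$ is a Gr\"obner basis. Your approach instead fixes the pure lexicographic order on variables (ordered by $\succ_{\lex}$ on multiplicity vectors), verifies directly that $\underline{x_Ix_J}$ is the initial term, and then proves $\langle\init_\succ G\rangle=\init_\succ\I_n^s$ by a Hilbert-function count. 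Both routes hinge on Theorem~\ref{thm: lex minimal} to identify the standard monomials with minimal collections.

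Two remarks on what each approach buys. Your explicit construction is more concrete and avoids the Sturmfels black box; it also makes the square-freeness visible once one checks that $\{I,I\}$ is always minimal (if $I^-\prec_{\lex}I$ then $J^-=2I-I^-\succ_{\lex}I\succeq_{\lex}I^-$, contradicting $I^-\succeq_{\lex}J^-$). On the other hand, your detour through Theorem~\ref{thm: IDP} is not actually needed: the degree-$k$ Hilbert function of $K[\mathbf{x}]/\I_n^s$ is, by definition of the toric ideal, the number of height-$k$ elements of the semigroup generated by $\A$, and the map ``minimal collection $\mapsto$ multiunion'' is already a bijection onto that set without ever invoking that the semigroup fills all of $kA_n^s\cap\Z^{n+1}$. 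The paper's proof likewise does not use Theorem~\ref{thm: IDP} at this stage.
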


\begin{proof}
For a collection of $s$-lecture hall multisets $\mathcal{I} = \{I_1,\ldots, I_r\}$, it is clear that the relation 
$$
x_{I_1}\cdots x_{I_r} - x_{I_1^-}\cdots x_{I_r^-}
$$
lies in the ideal $\I_n^s$.  
This is because the multiunion over each collection of multisets is the same and $I_1^-,\ldots,I_k^-$ are all $s$-lecture hall multisets.  
The binomials in $G$ define a reduction relation on $k[{\bf x}]$ for which the underlined term is treated as the leading term of the binomials.  
We say that a monomial is in \emph{normal} form with respect to a reduction relation if it is the remainder upon division with respect to the given set of polynomials and their specified leading terms \cite[Chapter 3]{S96}.  
It follows from Theorem~\ref{thm: lex minimal} that if $\I$ is not minimal, then there exists some pair $\{I_i,I_j\}\subset\I$ for which $\{I_i,I_j\}$ is not minimal.  
So a monomial $x^{\mathcal{I}}$ for $\mathcal{I} = \{I_1,\ldots, I_r\}$ is in normal form with respect to the reduction relation defined by $G$ if and only if $\mathcal{I}$ is minimal.  
Notice also that the reduction modulo $G$ is Noetherian; i.e., every sequence of reductions modulo $G$ terminates.  
This is because reduction of the monomial $x_{I_1}\cdots x_{I_r}$ by $x_{I_i}x_{I_j}-x_{I_i^-}x_{I_j^-}$ amounts to replacing the multiset $\I =  \{I_1,\ldots, I_r\}$ with the multiset $\I^\prime := \I \backslash\{I_i,I_j\}\cup\{I_i^-,I_j^-\}$.  
Since $\I^\prime$ is lexicographically smaller than $\I$, reduction modulo $G$ is Noetherian.  
So by applying \cite[Theorem 3.12]{S96} we find that $G$ is a coherently marked collection of binomials.  
Thus, it is a Gr\"obner basis for $\I_n^s$ with respect to some term order $\succ$ on $K[{\bf x}]$ and its initial ideal is generated by the underlined terms.  
It follows readily that the monomials in the initial ideal with respect to this term order are precisely the non-minimal monomials.  
Thus, $G$ is a quadratic and reduced Gr\'obner basis for $\I_n^s$ with a square-free initial ideal.
\end{proof}

The following corollary extends the results of \cite{BBKSZ15b} and \cite{HOT16}.  
In particular, it provides a partial answer to \cite[Conjecture 5.2]{HOT16} in a special case that they noted to be of particular interest; namely, when the first order difference sequence of $s$ is a $0,1$-sequence.
\begin{cor}
\label{cor: regular unimodular triangulation}
Let $s$ be a weakly increasing sequence of positive integers whose first order difference sequence of $s$ is a $0,1$-sequence.
There exists a regular, flag, and unimodular triangulation of $P_n^s$.
\end{cor}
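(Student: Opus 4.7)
The plan is to combine Theorem~\ref{thm: lexicographic grobner basis} with the standard dictionary between Gr\"obner bases of toric ideals and regular triangulations of lattice polytopes, and then transport the resulting triangulation of $A_n^s$ back to $P_n^s$ along the unimodular change of coordinates $h \circ \varphi$ introduced in Subsection~\ref{subsec: lecture hall multisets}.

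First, I would apply \cite[Theorem 8.3]{S96} to the Gr\"obner basis produced in Theorem~\ref{thm: lexicographic grobner basis}. Because $\init_\succ \I_n^s$ is generated by square-free monomials, the associated regular triangulation $\Delta_\omega$ of $A_n^s$ is unimodular with respect to the lattice $\Z \A$ spanned by the homogenized lattice points of $A_n^s$; because the generators of $\init_\succ \I_n^s$ are quadratic, the minimal non-faces of $\Delta_\omega$ consist of pairs of vertices, so $\Delta_\omega$ is flag.

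Second, I would transfer $\Delta_\omega$ to $P_n^s$ through $h \circ \varphi$. The map $\varphi$ is unimodular, since its matrix is lower triangular with $1$'s on the diagonal, and $h$ affinely embeds $\varphi(P_n^s)$ into the hyperplane $\sum_i z_i = s_{n+1}$ of $\R^{n+1}$; together they give a lattice-preserving affine bijection $P_n^s \to A_n^s$. Pulling $\Delta_\omega$ back along this bijection yields a triangulation of $P_n^s$ whose vertices lie in $P_n^s \cap \Z^n$; it inherits regularity by pulling back the height function $\omega$ and flagness as a purely combinatorial property of the face poset.

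To upgrade unimodularity with respect to $\Z \A$ into unimodularity with respect to $\Z^n$, I would invoke Theorem~\ref{thm: IDP}, specialized to the $n$-chain $\PP$, to conclude that $P_n^s = O(\PP, s)$ is IDP, whence the homogenized lattice points span all of $\Z^{n+1}$ and the pulled-back simplices have unit normalized volume with respect to $\Z^n$. The only real bookkeeping, and the one place where care is required, is the verification that $h \circ \varphi$ is a genuine unimodular equivalence of lattice polytopes; once that is in hand, each of the three properties — regularity, flagness, and unimodularity — transfers verbatim from $\Delta_\omega$ to the desired triangulation of $P_n^s$.
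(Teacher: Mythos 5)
Your proposal is correct and follows essentially the same route as the paper: invoke the quadratic, square-free Gr\"obner basis of Theorem~\ref{thm: lexicographic grobner basis} together with the Sturmfels correspondence to obtain a regular, flag, unimodular triangulation of $A_n^s$, then transport it to $P_n^s$ via the unimodular equivalence $h\circ\varphi$. Your extra step of citing Theorem~\ref{thm: IDP} to pass from unimodularity with respect to $\Z\A$ to unimodularity with respect to $\Z^n$ is exactly the point the paper handles in its background discussion (IDP implies $\Z\A=\Z^{n+1}$), so it is the same argument made slightly more explicit.
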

\begin{proof}
By Theorem~\ref{thm: lexicographic grobner basis} we know that the toric ideal of the polytope $A_n^s$ has a quadratic Gr\"obner basis for some term order that has a square-free initial ideal.  
It follows from \cite[Theorem 8.8]{S96} and \cite[Corollary 8.9]{S96} that $A_n^s$ has a regular, flag, and unimodular triangulation whose minimal non-faces are indexed by the lexicographically non-minimal sets of $s$-lecture hall multisets.  
Since $A_n^s$ is unimodularly equivalent to $P_n^s$, we conclude that $P_n^s$ has a regular, flag, and unimodular triangulation.  
\end{proof}

\bigskip

\noindent
{\bf Acknowledgements}. 
Petter Br\"and\'en was partially supported by the Knut and Alice Wallenberg Foundation and Vetenskapsr\aa det.
Liam Solus was partially supported by United States NSF Mathematical Sciences Postdoctoral Research Fellowship (DMS - 1606407).

\bibliography{lecture-hall-bibtex-file}{}
\bibliographystyle{plain}
\end{document}